        \DeclareFontFamily{OT1}{pzc}{}
\DeclareFontShape{OT1}{pzc}{m}{it}{<-> s * [1.10] pzcmi7t}{}
\DeclareMathAlphabet{\mathpzc}{OT1}{pzc}{m}{it}
\newtheorem{theorem}{Theorem}[section]
\newtheorem{lemma}[theorem]{Lemma}
\newtheorem{corollary}[theorem]{Corollary}
\newtheorem{proposition}[theorem]{Proposition}
\newtheorem{observation}[theorem]{Observation}
\newtheorem{fact}[theorem]{Fact}
\theoremstyle{definition}
\newtheorem{example}[theorem]{Example}
\newtheorem{definition}[theorem]{Definition}
\newtheorem{remark}[theorem]{Remark}
\newcommand{\Z}{{\mathbb Z}}
\newcommand{\N}{{\mathbb N}}
\newcommand{\R}{{\mathbb R}}
\DeclareMathOperator{\supp}{supp}
\def\span{\operatorname{span}}
\newcommand{\lsc}{lsc}
\newcommand{\free}[1]{\ensuremath{\mathcal{F}({#1})}}
\def\Lip{\operatorname{Lip}}
\newcommand{\Lipz}[1]{\ensuremath{\Lip_0({#1})}}
\renewcommand{\AE}{$\mathcal{M}\hspace{-1.5pt}\mathpzc{ol}$}
\def\norm#1{\left|\left|#1\right|\right|}
\title{On free bases of Banach spaces
}
\author{Eva Perneck\'{a}}
\author{Jan Sp\v{e}v\'{a}k}
\begin{document}

\begin{abstract}
    We call a closed subset $M$ of a Banach space $X$ a {\em free basis} of $X$ if it contains the null vector and every Lipschitz map from $M$ to a Banach space $Y$, which preserves the null vectors can be uniquely extended to a bounded linear map from $X$ to $Y$. We then say that two complete metric spaces $M$ and $N$ are {\em \AE-equivalent} if they admit bi-Lipschitz copies $M'$ and $N'$, respectively that are free bases of a common Banach space satisfying $\span M'=\span N'$. 
    
    In this note, we compare \AE-equivalence with some other natural equivalences on the class of complete metric spaces. The main result states that \AE-equivalent spaces must have the same \v{C}ech-Lebesgue covering dimension. In combination with the work of Godard, this implies that two complete metric spaces with isomorphic Lipschitz-free spaces need not be \AE-equivalent. Also, there exist non-homeomorphic \AE-equivalent metric spaces, and, in contrast with the covering dimension, the metric Assouad dimension is not preserved by \AE-equivalence.
  
\end{abstract}

\subjclass[2020]{46B03, 54E35, 54F45.}

\keywords{metric space, covering dimension, free basis, Lipschitz map, Lipschitz-free space}

\maketitle
\section{Introduction}
Vector spaces throughout this paper are considered over the field $\R$ of real numbers. The zero vector of a vector space $X$ is denoted by $0_X$.
We say that two normed spaces are {\em isomorphic} provided that there exists a linear bijection between them which is simultaneously a homeomorphism.  Similarly, we call two metric spaces {\em bi-Lipschitz isomorphic} if there exists a bijection between them that is Lipschitz and whose inverse is also Lipschitz. By covering dimension of a metric space we mean the \v{C}ech-Lebesgue covering dimension (or equivalently the large inductive dimension) of its underlining topological space.

We will now introduce two notions that will be central to our work.
\begin{definition}
Given a Banach space $X$, we say that its closed subset $M$ is a 
{\em free basis} of $X$ provided that $0_X\in {M}$ and 
for every Banach space $Y$ and every Lipschitz map $f:M\to Y$ which preserves zero vectors it is possible to uniquely extend $f$ to a bounded linear map $\hat{f}:X\to Y$. 
\end{definition}

We note that every complete metric space is a free basis of some Banach space. This is thanks to the existence of free Banach spaces recalled at the beginning of Section \ref{section:basics} (see Remark \ref{rem:F-eqv:Free:isom} (i)).

\begin{definition}
Given two complete metric spaces $M$ and $N$, we say that $M$ and $N$ are {\em \AE-equivalent} provided that there are free bases $M'$ and $N'$ of common Banach space such that $M$ is bi-Lipschitz isomorphic to $M'$ and $N$ to $N'$, and that $\span{M'}=\span{N'}$.
\end{definition}

The notation of \AE-equivalence emphasizes the fact that $M$ and $N$ are \AE-equivalent if and only if their corresponding normed spaces of {\em molecules} constructed by Arens and Eells in \cite{AE} are isomorphic. This follows from the universal property of the completion of the space of molecules established by Weaver in \cite[Theorem 3.6]{W2}. The Banach space obtained by such completion will be referred to as {\em Lipschitz-free space} and described in Section \ref{section:basics}.

We will study the relations of \AE-equivalence to some other natural equivalences on the class of complete metric spaces. More specifically, we will deal with equivalences given by: bi-Lipschitz isomorphisms, homeomorphisms, covering dimension, and isomorphisms of the corresponding Lipschitz-free spaces (which we will refer to as {\em $\mathcal{F}$-equivalence}).

Obviously, a bi-Lipschitz isomorphism is a homeomorphism, and homeomorphisms preserve covering dimension. Also, bi-Lipschitz isomorphic spaces are \AE-equivalent and, because Lipschitz-free spaces are completions of spaces of molecules, \AE-equivalence implies $\mathcal{F}$-equivalence.
It is proved in \cite[Theorem 3.55]{W2} that if two complete metric spaces that are uniformly concave (see \cite[Definition 3.33]{W2}) have linearly isometric Lipschitz-free spaces, then there exists a bijection between such metric spaces and a positive constant $r$ such that the distance of any two images is just the $r$ multiple of the distance of their preimages. In particular, the metric spaces are bi-Lipschitz isomorphic and thus \AE-equivalent. However, as we will see in the sequel, mere $\mathcal{F}$-equivalence does not ensure \AE-equivalence, and the latter does not even imply that the metric spaces are homeomorphic. 

No other implication can be added to the following diagram (here {\em dim} stands for the relation of having the same covering dimension):

\begin{center} \medskip\hspace{1em}\xymatrix{\mbox{bi-Lipschitz isomorphic} \ar@{->}@<-.71ex>[r] \ar@{->}[d]
  &\mbox{\AE-equivalent}  \ar@{->}@<-.71ex>[d] \ar@{->}@<0ex>[ld]|-\times  \ar@{->}@<0ex>[ld]_{(c)} \ar@{->}@<-.71ex>[l]|-\times \ar@{->}@<-.71ex>[l]_{\hspace{23 pt}(a)}  \ar@{->}@<0ex>[rd]^{(d)}  \\
  \mbox{homeomorphic} \ar@{->}@/_1.3pc/[rr]     \ar@{->}@<0ex>[r]|-\times \ar@{->}@<0ex>[r]^{(b)} & \mathcal{F}\mbox{-equivalent} \ar@{->}@<-.ex>[r]|-\times \ar@{->}@<-.ex>[r]^{(e)}   &\mbox{dim}     
  }
  
\end{center}
\bigskip
\bigskip

The fact that the implication (a) does not hold is implicitly contained in \cite{AACD2}, it is discussed in detail in Remark \ref{rem:AE:not:bilip} and follows also from the non-existence of the implication (c). Non-trivial examples of homeomorphic spaces that are not $\mathcal{F}$-equivalent, and hence witness that the implication (b) does not hold, were constructed in \cite{NS,Godard}.  Naor and Schechtman showed in \cite{NS}  that the homeomorphic discrete spaces $\Z$ and $\Z^2$ with their Euclidean metrics do not have isomorphic Lipschitz-free spaces, while Godard proved in \cite{Godard} that the homeomorphic Cantor sets one of zero and the other of non-zero Lebesgue measure have non-isomorphic Lipschitz-free spaces. We will address the failure of the implication (c) in Section \ref{section:examples}, where we recall a construction of \AE-equivalent spaces that can be found for instance in \cite{AACD_jfa} (see also \cite{Kauf}) and leads to examples of non-homeomorphic \AE-equivalent spaces such as in Example \ref{ex:AE:not:homeo}. Note that there are many examples of non-homeomorphic $\mathcal{F}$-equivalent spaces. To mention a few, see \cite{AACD1,G-LP,Godard,Kauf,W2}. Nevertheless,  these results are based on (techniques similar to) the Pe{\l}czy\'nski's decomposition method and therefore it is not clear, whether they can give examples of \AE-equivalent spaces. We also point out on two classical spaces that \AE-equivalence does not preserve Assouad dimension (Example \ref{ex:assouad:not:preserved:AE}). This should be compared with Corollary \ref{thm:same:dim:for:fin:supported} of the last section. 

In the main result of our note, Theorem \ref{them:main} and Corollary \ref{thm:same:dim:for:fin:supported}, we prove that \AE-equivalent spaces must have the same covering dimension, i.e. the implication (d). On the other hand, by a result due to Godard, \cite[Corollary 3.4]{Godard}, the closed unit interval and a Cantor set of a positive measure have isomorphic Lipschitz-free spaces and are therefore $\mathcal{F}$-equivalent while having different covering dimensions. This shows the non-existence of the implication (e). So in particular, $\mathcal{F}$-equivalence does not imply \AE-equivalence. Let us also mention in this context a classical open problem in the area of Lipschitz-free spaces, which is to determine whether the Lipschitz-free spaces over Euclidean spaces of different dimensions strictly greater than one might be isomorphic. Corollary \ref{thm:same:dim:for:fin:supported} implies that, should there exist an isomorphism, it cannot preserve the finiteness of supports introduced in \cite[Definition 2.5]{APPP}.

\section{Some basic properties of free bases and {\texorpdfstring{\AE}--equivalence}}
\label{section:basics}

A pointed metric space is a metric space $M$ with a distinguished base point $e_M\in M$. We consider a base point of a normed space to be its null vector. 
Given a Banach space $Y$, we denote by $\Lipz{M,Y}$ the Banach space of all $Y$-valued Lipschitz maps that preserve the base points, endowed with the norm which assigns to each $f\in\Lipz{M,Y}$ its Lipschitz number $L(f)$. As usually, we write $\Lipz{M}$ instead of $\Lipz{M,\R}$.

Recall that given a pointed metric space $M$, the Lipschitz-free space $\free{M}$ is the unique (up to a linear isometry) Banach space characterized by the following universal property (see \cite[Theorem 3.6]{W2}):
There is an isometry $\delta: M\to\free{M}$ which preserves the base points and for every Banach space $Y$ and every $f\in\Lipz{M,Y}$, there is a unique bounded linear map $\bar{f}:\free{M}\to Y$ making the following diagram commutative

\begin{equation}\label{eq:free:diagram}
     \medskip\hspace{1em}
    \xymatrix{M  \ar@{->}[rd]^f \ar@{->}[d]_{\delta} & \\
               \free{M} \ar@{->}[r]_{\bar{f}} & Y,}
\end{equation}
and such that $||{\bar{f}}||=L(f)$, where $L(f)$ is the Lipschitz number of $f$. We will write $\delta_M$ instead of $\delta$ when we need to emphasize the underlying metric space $M$. Moreover, for $Y=\R$ the map that assigns to each $f$ the bounded linear functional $\bar{f}$ as in \eqref{eq:free:diagram} is a linear isometry from $\Lipz{M}$ onto $\free{M}^*$ (see for instance \cite[Theorem 3.3]{W2}). The weak$^\ast$ topology of $\Lipz{M}$ induced by $\free{M}$ agrees on bounded sets with the topology of point-wise convergence of functions.
It is a simple and folklore fact, that $\free{M}$ does not depend on the choice of a base point of $M$.

Now we relate the notions of free bases and \AE-equivalence to Lipschitz-free spaces. It is immediate in view of the extension property of the free bases and the universal property \eqref{eq:free:diagram} of the Lipschitz-free spaces.
\begin{observation}\label{ob:M:free:basis:of:F(M)}
    \begin{itemize}
   \item[(i)] If $M$ is a free basis of a Banach space $X$, then $X$ is isomorphic to $\free{M}$.
   \item[(ii)] Two complete metric spaces $M,N$ are \AE-equivalent if and only if  there exists an isomorphism between $\free{M}$ and $\free{N}$ that maps $\span \delta_M(M)$ onto $\span \delta_N(N)$. 
   \end{itemize}
\end{observation}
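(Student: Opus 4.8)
The plan is to derive both statements directly from the interplay between the defining extension property of a free basis and the universal property \eqref{eq:free:diagram} of the Lipschitz-free space, using the two uniqueness clauses to identify the linear maps that arise. Throughout I treat $M$ as a pointed metric space with base point its null vector, and I will use without further comment that $\span\delta_M(M)$ is dense in $\free{M}$, being the space of molecules.

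For (i), I would first observe that since $M$ is a closed subset of $X$ containing $0_X$, the inclusion $\iota\colon M\hookrightarrow X$ is a base-point preserving isometric embedding, so $\iota\in\Lipz{M,X}$; applying \eqref{eq:free:diagram} to $\iota$ yields a bounded linear map $\bar\iota\colon\free{M}\to X$ with $\bar\iota\circ\delta_M=\iota$. In the other direction, the canonical isometry $\delta_M\colon M\to\free{M}$ is itself a base-point preserving Lipschitz map, so the free-basis property of $M$ in $X$ produces a bounded linear extension $\widehat{\delta_M}\colon X\to\free{M}$ with $\widehat{\delta_M}|_M=\delta_M$. I would then check these are mutually inverse: the self-map $\widehat{\delta_M}\circ\bar\iota$ of $\free{M}$ satisfies $(\widehat{\delta_M}\circ\bar\iota)\circ\delta_M=\widehat{\delta_M}\circ\iota=\delta_M$, and since $\mathrm{id}_{\free{M}}$ has the same property, uniqueness in \eqref{eq:free:diagram} forces $\widehat{\delta_M}\circ\bar\iota=\mathrm{id}_{\free{M}}$; symmetrically, $\bar\iota\circ\widehat{\delta_M}$ restricts on $M$ to $\bar\iota\circ\delta_M=\iota$, so uniqueness in the free-basis property (the identity $\mathrm{id}_X$ being another such extension) gives $\bar\iota\circ\widehat{\delta_M}=\mathrm{id}_X$. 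Thus $\bar\iota$ is the desired isomorphism.

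For (ii), I would first record the standard fact, also a consequence of \eqref{eq:free:diagram}, that a base-point preserving bi-Lipschitz bijection $\phi\colon M\to M'$ induces a linear isomorphism $\free{\phi}\colon\free{M}\to\free{M'}$ with $\free{\phi}\circ\delta_M=\delta_{M'}\circ\phi$, which by linearity maps $\span\delta_M(M)$ onto $\span\delta_{M'}(M')$; since $\free{\cdot}$ and the molecule subspace are independent of the base point, the bi-Lipschitz identifications from the definition of \AE-equivalence may be taken to preserve base points. For the forward implication, given free bases $M',N'$ of a common $X$ with $\span M'=\span N'$, part (i) supplies isomorphisms $\bar\iota_{M'}\colon\free{M'}\to X$ and $\bar\iota_{N'}\colon\free{N'}\to X$ carrying $\delta_{M'}(M')$ and $\delta_{N'}(N')$ onto $M'$ and $N'$, hence $\span\delta_{M'}(M')$ and $\span\delta_{N'}(N')$ onto $\span M'=\span N'$; composing $\bar\iota_{N'}^{-1}\circ\bar\iota_{M'}$ with the free-space isomorphisms coming from $M\cong M'$ and $N\cong N'$ produces an isomorphism $\free{M}\to\free{N}$ mapping $\span\delta_M(M)$ onto $\span\delta_N(N)$. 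For the converse I would take $X=\free{N}$, with $N'=\delta_N(N)$ a closed isometric copy of $N$ that is a free basis of $\free{N}$, and $M'=T(\delta_M(M))$, where $T\colon\free{M}\to\free{N}$ is the given isomorphism; then $M'$ is a bi-Lipschitz copy of $M$ via $T\circ\delta_M$, and $\span M'=T(\span\delta_M(M))=\span\delta_N(N)=\span N'$ by hypothesis, so it remains only to verify that $M'$ is genuinely a free basis of $\free{N}$.

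The step I expect to require the most care is exactly this last verification. I would establish the extension property of $M'$ by transport along $T$: given a base-point preserving Lipschitz $g\colon M'\to Y$, the composite $g\circ T\circ\delta_M$ lies in $\Lipz{M,Y}$, so \eqref{eq:free:diagram} yields a bounded linear $h\colon\free{M}\to Y$ with $h\circ\delta_M=g\circ T\circ\delta_M$, and then $h\circ T^{-1}\colon\free{N}\to Y$ is a bounded linear map agreeing with $g$ on $M'$. Uniqueness is where the span hypothesis is essential: any bounded linear extension of $g$ is determined on $\span M'=T(\span\delta_M(M))$, which is dense in $\free{N}$ because $T$ is a homeomorphism and $\span\delta_M(M)$ is dense in $\free{M}$. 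Closedness of $M'$ and the membership $0\in M'$ are then immediate, the former from completeness of $M$ together with the fact that $T\circ\delta_M$ is a homeomorphic embedding. Assembling these observations shows that $M'$ and $N'$ witness the \AE-equivalence of $M$ and $N$, completing the argument.
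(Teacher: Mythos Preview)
Your argument is correct and is precisely the natural unfolding of the paper's one-line justification (``immediate in view of the extension property of the free bases and the universal property \eqref{eq:free:diagram}''); the paper gives no further details, and your construction of the mutually inverse maps $\bar\iota$ and $\widehat{\delta_M}$ together with the transport-along-$T$ verification for (ii) is exactly what that sentence is pointing to. One small remark: in the converse of (ii) you say ``uniqueness is where the span hypothesis is essential,'' but in fact the density of $\span M'$ in $\free{N}$ already follows from $T$ being an isomorphism and $\span\delta_M(M)$ being dense in $\free{M}$; the hypothesis $T(\span\delta_M(M))=\span\delta_N(N)$ is really needed only to secure the condition $\span M'=\span N'$ in the definition of \AE-equivalence, not for the free-basis property of $M'$ itself.
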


In the following remark, we summarize some basic properties of free bases and \AE-equivalent spaces that follow from the definitions and Observation \ref{ob:M:free:basis:of:F(M)}.

\begin{remark}\label{rem:F-eqv:Free:isom}
\begin{itemize}
   \item[(i)]  For a metric space $M$, $\delta(M)$ is a free basis of $\free{M}$. Since $\free{M}$ exists for every metric space $M$, it follows that every complete metric space can be isometrically embedded into some Banach space as its free basis.

   \item[(ii)]  Complete metric spaces $M,N$ are $\mathcal{F}$-equivalent if and only if they admit bi-Lipschitz copies $M',N'$, respectively, that are free bases of a common Banach space.

    \item[(iii)]  By \cite[Corollary 3.46]{W2}, infinite-dimensional reflexive Banach spaces do not admit free bases.

    \item[(iv)] Note that if $M$ is a free basis of $X$, then there is $K>0$ such that for every Lipschitz map $f$ from $M$ to a Banach space $Y$ that preserves the base point, its bounded linear extension $\hat{f}:X\to Y$ satisfies  $||\hat{f}||\le K\cdot L(f)$.     Indeed, the extension $\hat{\delta}:X\to\free{M}$ of the isometric embedding $\delta: M\to\free{M}$ is an isomorphism of  $X$ and $\free{M}$. From \eqref{eq:free:diagram} we get $\hat{f}=\bar{f}\circ\hat{\delta}$. Since ${||\bar{f}||}=L(f)$, it suffices to put $K=||{\hat{\delta}}||$. Thus the definition of a free basis can be expressed formally stronger by requiring the existence of such a $K>0$.

    \item[(v)] If $M$ is a free basis of a Banach space $X$, then $M\setminus\{0_X\}$ must be linearly independent and $\span{M}$ is dense in $X$. Indeed, the first property follows from the existence of linear extensions of Lipschitz maps from $M$ to the whole $X$ (or just $\span M$), and the second follows from its uniqueness, both required by the definition of the free basis.
\end{itemize}
\end{remark}

We proceed by characterizing the \AE-equivalence of metric spaces in terms of the corresponding spaces of Lipschitz functions.

\begin{lemma}\label{lemma:span:in:span:iff:point:cont}
Let $M$ and $N$ be metric spaces such that there exists a linear isomorphism $T$ from $\free{M}$ onto $\free{N}$. Then $T(\delta_M(M))\subseteq \span(\delta_N(N))$ if and only if the adjoint $T^*:\Lipz{N}\to\Lipz{M}$ is continuous in the topology of point-wise convergence.
\end{lemma}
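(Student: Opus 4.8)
The plan is to recast the statement in the language of weak topologies arising from dual pairs and then apply the standard duality principle governing continuity of linear maps for such topologies. The starting observation is the identification recalled before the lemma: under the isometric identification $\Lipz{M}=\free{M}^*$, the evaluation of $f\in\Lipz{M}$ at a point $x\in M$ is exactly the pairing $\langle f,\delta_M(x)\rangle$. Hence the topology of pointwise convergence on $\Lipz{M}$ coincides with the weak topology $\sigma(\Lipz{M},\span\delta_M(M))$ determined by the dual pair $\langle\Lipz{M},\span\delta_M(M)\rangle$; this is a genuine dual pair because $\delta_M(M)$ separates the points of $\Lipz{M}$ (two Lipschitz functions agreeing on all of $M$ being equal). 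The same holds over $N$. So I would reduce the lemma to: $T^*$ is continuous from $\sigma(\Lipz{N},\span\delta_N(N))$ to $\sigma(\Lipz{M},\span\delta_M(M))$ if and only if $T(\delta_M(M))\subseteq\span\delta_N(N)$.

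For the easy implication, assume $T(\delta_M(M))\subseteq\span\delta_N(N)$ and take any net $g_\alpha\to g$ pointwise in $\Lipz{N}$. Fixing $x\in M$ and writing $T\delta_M(x)=\sum_i c_i\delta_N(y_i)$, I compute $(T^*g_\alpha)(x)=\langle g_\alpha,T\delta_M(x)\rangle=\sum_i c_i\,g_\alpha(y_i)\to\sum_i c_i\,g(y_i)=(T^*g)(x)$, so $T^*g_\alpha\to T^*g$ pointwise. As a linear map taking convergent nets to convergent nets, $T^*$ is then continuous.

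For the converse, I would fix $x\in M$ and note that the linear functional $g\mapsto\langle g,T\delta_M(x)\rangle=(T^*g)(x)$ on $\Lipz{N}$ is the composition of $T^*$ with the pointwise-continuous evaluation at $x$, hence continuous for the topology of pointwise convergence. This functional is the one represented by $T\delta_M(x)\in\free{N}\subseteq\Lipz{N}^*$. Invoking the fundamental duality fact that the dual of $(\Lipz{N},\sigma(\Lipz{N},\span\delta_N(N)))$ is precisely $\span\delta_N(N)$, a pointwise-continuous functional must be represented by an element of $\span\delta_N(N)$, whence $T\delta_M(x)\in\span\delta_N(N)$. Since $x$ was arbitrary, $T(\delta_M(M))\subseteq\span\delta_N(N)$.

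The main obstacle is precisely this converse step, namely the identification of the continuous dual of $\Lipz{N}$ in the pointwise topology with $\span\delta_N(N)$. It rests on the elementary algebraic lemma that a linear functional dominated by a finite family of linear functionals is a linear combination of them, applied to the defining seminorms $f\mapsto|f(y)|=|\langle f,\delta_N(y)\rangle|$ of the pointwise topology; everything else is routine bookkeeping with the adjoint and the canonical pairing.
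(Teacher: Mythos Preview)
Your proof is correct. The forward implication is identical to the paper's argument.

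For the converse, however, you take a different route from the paper. The paper argues by contrapositive: assuming some $x\in M$ has $T\delta_M(x)\notin\span\delta_N(N)$, it uses Hahn--Banach to produce, for every finite $F\subseteq N$, a function $f_F\in\Lipz{N}$ vanishing on $F$ with $\langle T\delta_M(x),f_F\rangle=1$; the net $(f_F)$, directed by inclusion, converges pointwise to $0$ while $T^*f_F(x)\equiv 1$, so $T^*$ is not pointwise continuous. You instead argue directly: you identify the pointwise topology on $\Lipz{N}$ with the weak topology $\sigma(\Lipz{N},\span\delta_N(N))$ of a separated dual pair, observe that the functional $g\mapsto(T^*g)(x)=\langle g,T\delta_M(x)\rangle$ is continuous for this topology, and then invoke the standard fact that the continuous dual of a space in a weak topology $\sigma(E,F)$ is exactly $F$ to conclude $T\delta_M(x)\in\span\delta_N(N)$. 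Your approach is cleaner and situates the lemma as a special case of a general locally convex principle; the paper's approach is more self-contained and explicit, since the Hahn--Banach construction of the net $(f_F)$ is in effect an \emph{ad hoc} proof of the very duality fact you quote.
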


\begin{proof}
First assume that $T(\delta_M(M))\subseteq \span(\delta_N(N))$ and consider a net $(f_i)\subseteq \Lip_0(N)$ (not necessarily bounded) that converges point-wise to some $f\in\Lip_0(N)$. Fix $x\in M$. Then there exist $k\in\N$, $y_1,\ldots, y_k\in N$, $a_1,\ldots, a_k\in\R$ such that $T(\delta_M(x))=\sum_{n=1}^k a_n\delta_N(y_n)$ and clearly
$$T^*(f_i)(x)=\langle T(\delta_M(x)),f_i \rangle=\sum_{n=1}^k a_n f_i(y_n)\underset{i}{\longrightarrow} \sum_{n=1}^k a_n f(y_n)=T^*(f)(x).$$ Hence $T^*$ is continuous with respect to the topologies of point-wise convergence.

For the converse, assume that there exists $x\in M$ such that $T(\delta_M(x))\notin \span (\delta_N(N))$. For any finite $F\subseteq N$, find by the Hahn-Banach theorem an $f_F\in \Lip_0(N)$ such that $\langle T(\delta_M(x)),f_F\rangle =1$ and $f_F\upharpoonright_{\span\{\delta_N(y): y\in F\}}=0$. Then $(f_F)$, directed by the set inclusion, is a net in $\Lip_0(N)$ that converges point-wise to the zero function in $\Lip_0(N)$. Indeed, for any $y\in N$ and any $F\subseteq N$ finite such that $y\in F$ we have $f_F(y)=\langle \delta_N(y), f_F\rangle = 0$. However, for every finite $F\subseteq N$ we have $T^*(f_F)(x)=\langle \delta_M(x), T^*(f_F)\rangle= \langle T(\delta_M(x)), f_F\rangle =1$ and $T^*(f_F)$ does not converge point-wise to $0$ in $\Lip_0(M)$. So we conclude that if $T^*$ is continuous in the topology of point-wise convergence, then $T(\delta_M(M))\subseteq \span (\delta_N(N))$.
\end{proof}

The following is a direct consequence of Lemma \ref{lemma:span:in:span:iff:point:cont}, Observation \ref{ob:M:free:basis:of:F(M)}  and the description of weak$^\ast$ topology on bounded sets of $\Lipz{M}$.

\begin{proposition}\label{fin:supported:iff:pointwise:homeo}
    Complete metric spaces $M,N$ are \AE-equivalent if and only if there exists an isomorphism between $\Lipz{M}$ and $\Lipz{N}$ that is also a homeomorphism in the topology of point-wise convergence.
\end{proposition}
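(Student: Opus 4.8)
The plan is to pass to preduals and read \AE-equivalence off the adjoint operator, exploiting two facts already available: that $\Lipz{M}$ is isometrically $\free{M}^*$ via the map from \eqref{eq:free:diagram}, and that on bounded sets the weak$^\ast$ topology coincides with the topology $\tau_p$ of point-wise convergence. Under this identification an isomorphism $T\colon\free{M}\to\free{N}$ has an adjoint $T^*\colon\Lipz{N}\to\Lipz{M}$, which is again an isomorphism, and the whole argument consists of matching the span condition of Observation \ref{ob:M:free:basis:of:F(M)}(ii) with $\tau_p$-continuity of $T^*$ through Lemma \ref{lemma:span:in:span:iff:point:cont}.

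For the forward implication I would start from Observation \ref{ob:M:free:basis:of:F(M)}(ii): \AE-equivalence gives an isomorphism $T\colon\free{M}\to\free{N}$ with $T(\span\delta_M(M))=\span\delta_N(N)$. Since $T$ is linear and the spans are subspaces, this single equality is equivalent to the two inclusions $T(\delta_M(M))\subseteq\span\delta_N(N)$ and $T^{-1}(\delta_N(N))\subseteq\span\delta_M(M)$. Applying Lemma \ref{lemma:span:in:span:iff:point:cont} to $T$ shows that $S:=T^*$ is $\tau_p$-continuous, and applying it to the isomorphism $T^{-1}$ (with the roles of $M$ and $N$ interchanged) shows that $(T^{-1})^*=S^{-1}$ is $\tau_p$-continuous. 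Hence $S\colon\Lipz{N}\to\Lipz{M}$ is an isomorphism that is simultaneously a $\tau_p$-homeomorphism.

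The converse is the substantive direction, and I expect the main obstacle to be promoting a given $\tau_p$-homeomorphism to a genuine adjoint operator. Suppose $S\colon\Lipz{N}\to\Lipz{M}$ is an isomorphism that is a $\tau_p$-homeomorphism. Being bounded, $S$ carries the unit ball $B_{\Lipz{N}}$ into a bounded set, and on bounded sets $\tau_p$ agrees with weak$^\ast$; thus $S$ restricted to $B_{\Lipz{N}}$ is weak$^\ast$-to-weak$^\ast$ continuous. Consequently, for each $\mu\in\free{M}$ the functional $f\mapsto\langle\mu,Sf\rangle$ is weak$^\ast$-continuous on $B_{\Lipz{N}}$, so by the Krein--Smulian theorem (a linear functional on a dual Banach space is weak$^\ast$-continuous as soon as its restriction to the unit ball is) it is weak$^\ast$-continuous on all of $\Lipz{N}$, i.e.\ it is represented by an element of $\free{N}$. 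This makes $S$ weak$^\ast$-to-weak$^\ast$ continuous, hence $S=T^*$ for a bounded $T\colon\free{M}\to\free{N}$. Running the same argument for the $\tau_p$-homeomorphism $S^{-1}$ produces $R\colon\free{N}\to\free{M}$ with $S^{-1}=R^*$; then $(RT)^*=T^*R^*=SS^{-1}=\mathrm{id}$ and $(TR)^*=\mathrm{id}$, whence $RT=\mathrm{id}$, $TR=\mathrm{id}$, and $T$ is an isomorphism with $T^{-1}=R$.

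It then remains to recover the span condition and close the loop. Since $S=T^*$ is $\tau_p$-continuous, Lemma \ref{lemma:span:in:span:iff:point:cont} gives $T(\delta_M(M))\subseteq\span\delta_N(N)$; since $S^{-1}=(T^{-1})^*$ is $\tau_p$-continuous, the same lemma applied to $T^{-1}$ gives $T^{-1}(\delta_N(N))\subseteq\span\delta_M(M)$. Together these two inclusions force $T(\span\delta_M(M))=\span\delta_N(N)$, and Observation \ref{ob:M:free:basis:of:F(M)}(ii) then yields that $M$ and $N$ are \AE-equivalent. Apart from the Krein--Smulian step in the converse, the proof is just a matter of feeding the two inclusions into Lemma \ref{lemma:span:in:span:iff:point:cont} and Observation \ref{ob:M:free:basis:of:F(M)}(ii), which is why the statement can reasonably be billed as a direct consequence of them.
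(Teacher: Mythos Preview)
Your proof is correct and follows exactly the route the paper indicates: you invoke Observation~\ref{ob:M:free:basis:of:F(M)}(ii), Lemma~\ref{lemma:span:in:span:iff:point:cont}, and the coincidence of $\tau_p$ with the weak$^\ast$ topology on bounded sets, which is precisely what the paper means when it calls the proposition a direct consequence of these three ingredients. The Krein--Smulian step you spell out for the converse---promoting a $\tau_p$-homeomorphism to a weak$^\ast$-to-weak$^\ast$ continuous map and hence to an adjoint---is the detail hiding behind the paper's reference to ``the description of weak$^\ast$ topology on bounded sets,'' so you have simply unpacked what the paper leaves implicit.
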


\begin{remark}\label{rem:graev}
Given a Tychonoff space $M$ with a distinguished base point, denote by $A_g(M)$ the free abelian topological group over $M$ in the sense of Graev (see \cite{Graev} or \cite{AT} for the definition). It is an abelian topological group with the property, that there is a homeomorphic embedding $\delta:M\to A_g(M)$ which preserves the base points and such, that for every continuous map $f$ from $M$ to an abelian topological group $Y$ that preserves the base points, there is a unique continuous homomorphism $\hat{f}:A_g(M)\to Y$ making the following diagram commutative:

\begin{equation*}
     \medskip\hspace{1em}
    \xymatrix{M  \ar@{->}[rd]^f \ar@{->}[d]_\delta & \\
               A_g({M}) \ar@{->}[r]_{\hat{f}} & Y.}
\end{equation*}

During his construction of the free abelian topological group Graev extended in a specific way every continuous pseudometric $d$ on $M$ to a translation-invariant continuous pseudometric on $A_g(M)$. As in \cite{CHDT}, we denote $A_g(M)$ with such a pseudometric $A_d(M)$. If $d$ is a metric on $M$, then $A_d(M)$ becomes a metric group and $\delta:(M,d)\to A_d(M)$ is an isometric embedding. It was noticed by Chasco et al. in \cite[Theorem 2.11]{CHDT} that the metric group generated by $\delta_M(M,d)$ in $\free{M}$ is (isometrically {isomorphic} to) $A_d(M)$ (i.e. there is a group isomorphism of the group generated by $\delta_M(M,d)$ onto $A_d(M)$ which is an isometry).
The metric group $A_d(M)$ uniquely determines\footnote{alternatively, similarly as in \cite{PS}, we may say that the metric group $A_d(M)$ has invariant linear span in the class of normed spaces - that is, whenever $A_d(M)$ isometrically and algebraically embeds into a normed space, then it produces the same linear span (up to a linear isometry).} the normed space of molecules on $M$ constructed by  Arens and Eells in \cite{AE}, and, consequently, its completion -- the Lipschitz-free space $\free{M}$ -- by a standard construction. Namely, using positive homogeneity  of norms, we can extend the distance of each element of $A_d(M)$ from zero to every finite rational linear combination of elements of $M$ because appropriate integer multiple of such a linear combination is an element of $A_d(M)$. Since the rational span of $M$ is dense in $\free{M}$, this distance can be uniquely extended to the whole $\free{M}$. It follows that if for two complete metric spaces $M,N$ there is a group isomorphism of the corresponding metric groups $A_d(M),A_d(N)$ which is at the same time bi-Lipschitz, then $M$ and $N$ are \AE-equivalent. 
\end{remark}

\section{Construction of \texorpdfstring{\AE}--equivalent spaces}
\label{section:examples}
In this section, we present two constructions of \AE-equivalent spaces that provide important examples.

The following fact is a part of \cite[Theorem 3.9 and Proposition 4.2]{AACD2} translated to the language of free bases.

\begin{fact}\label{cor:Ae:eq:not:bilip}
    Let $M$ be a free basis of a Banach space $X$. Define $\mu:M\to X$ by 

    $$
\mu(x) = \left\{
\begin{array}{ll}
0_X &\mbox{for $x =0_X$}
\\
\frac{x}{\norm{x}} & \mbox{for $x\in M\setminus \{0_X\}$}.
\end{array}
\right.$$
If $0_X$ is an isolated point of $M$, then
\begin{enumerate}
    \item  $\mu(M)$ is a free basis of $X$ (in particular, $\mu(M)$ is complete);
    \item  $\mu$ is a homeomorphic embedding.
\end{enumerate}
 In particular, if $0_X$ is an isolated point of $M$, then $M$ and $\mu(M)$ are \AE-equivalent and homeomorphic metric spaces, and $\mu(M)$ is bounded.
\end{fact}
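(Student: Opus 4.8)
The plan is to verify the three defining requirements of a free basis for $\mu(M)$ — that $0_X\in\mu(M)$, that $\mu(M)$ is closed, and that every base-point-preserving Lipschitz map out of $\mu(M)$ extends uniquely to a bounded linear map on $X$ — and then to read off the homeomorphism and the remaining ``in particular'' assertions. Two preliminary observations will be used throughout. Isolation of $0_X$ provides $\varepsilon>0$ with $\|x\|\ge\varepsilon$ for all $x\in M\setminus\{0_X\}$; and since each $\mu(x)$ is a positive multiple of $x$, we have $\span\mu(M)=\span M$, which is dense in $X$ by Remark \ref{rem:F-eqv:Free:isom}(v), while $\mu(M)\setminus\{0_X\}$ inherits linear independence from $M\setminus\{0_X\}$. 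Finally $\mu(M)\subseteq\{0_X\}\cup S_X$, which already yields boundedness and the ``quantization'' $\|u\|\in\{0,1\}$ for $u\in\mu(M)$.

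For the extension property I would start from a base-point-preserving Lipschitz $g\colon\mu(M)\to Y$ and note that any linear extension of $g$ is forced, on $x\in M\setminus\{0_X\}$, to take the value $\|x\|\,g(\mu(x))$. So I define $f\colon M\to Y$ by $f(0_X)=0_Y$ and $f(x)=\|x\|\,g(\mu(x))$ and check that $f$ is Lipschitz: writing $a=\|x\|$, $b=\|y\|$ and using $\|x/a-y/b\|\le\tfrac1a\|x-y\|+\tfrac{|a-b|}{a}\le\tfrac2a\|x-y\|$ together with $\|g(\mu(y))\|\le L(g)$ (because $\mu(y)\in S_X$) gives $L(f)\le 3L(g)$. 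Since $M$ is a free basis, $f$ extends to a bounded linear $\hat f\colon X\to Y$, and $\hat f(\mu(x))=\tfrac1{\|x\|}\hat f(x)=g(\mu(x))$, so $\hat f$ extends $g$; uniqueness is immediate from density of $\span\mu(M)$. Note this step does not use isolation.

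The main obstacle is closedness of $\mu(M)$ (equivalently, continuity of $\mu^{-1}$), whose heart is to rule out a sequence $x_n\in M\setminus\{0_X\}$ with $\|x_n\|\to\infty$ while $\mu(x_n)$ converges to some $z\in S_X$. I would exclude this using the freeness of $M$ itself. After passing to a subsequence with $\|x_n\|$ increasing and $\|x_{n+1}\|>2\|x_n\|$, choose a piecewise-linear $\varphi\colon[0,\infty)\to\R$ with $\varphi(0)=0$ whose nodal values alternate between $\varphi(\|x_{2k}\|)=\|x_{2k}\|$ and $\varphi(\|x_{2k+1}\|)=0$; the rapid growth keeps all slopes below $2$, so $\varphi$ is Lipschitz, whence $x\mapsto\varphi(\|x\|)$ is base-point-preserving and Lipschitz on $M$. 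Its bounded linear extension $F$ satisfies $F(\mu(x_n))=F(x_n)/\|x_n\|=\varphi(\|x_n\|)/\|x_n\|$, so continuity of $F$ would force $\varphi(\|x_n\|)/\|x_n\|\to F(z)$; but this quotient alternates between $1$ and $0$, a contradiction. Hence whenever $\mu(x_n)\to z$ the norms $\|x_n\|$ stay bounded; passing to a subsequence with $\|x_n\|\to s\ge\varepsilon$ gives $x_n\to sz$, which lies in $M$ as $M$ is closed, and $\mu(sz)=z$, proving $z\in\mu(M)$ (the case $z=0_X$ being handled by the quantization). Applying the same reasoning to a sequence with $\mu(x_n)\to\mu(x)$, linear independence of $M\setminus\{0_X\}$ forces the limit $s$ of $\|x_n\|$ to equal $\|x\|$, so $x_n\to x$ and $\mu^{-1}$ is continuous.

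Finally I would assemble the pieces. That $\mu$ is injective follows from linear independence, and that it is Lipschitz (constant $2/\varepsilon$) from the estimate above, with isolation of $0_X$ handling continuity at the base point; combined with continuity of $\mu^{-1}$ this gives (2). Closedness together with the extension property gives (1), and completeness of $\mu(M)$ then follows since a closed subset of a Banach space is complete. For the concluding assertions, $M$ and $\mu(M)$ are free bases of the common space $X$ with $\span M=\span\mu(M)$, hence \AE-equivalent directly from the definition; they are homeomorphic via $\mu$ by (2); and $\mu(M)\subseteq\{0_X\}\cup S_X$ is bounded.
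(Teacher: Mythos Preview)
Your argument is correct. The paper itself does not prove this statement at all: it records it as a Fact extracted from \cite[Theorem~3.9 and Proposition~4.2]{AACD2} and moves on. So you are supplying a self-contained proof where the paper simply defers to an external reference.

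The substantive idea in your write-up is the oscillation trick for closedness: applying the free-basis extension property to the real-valued Lipschitz map $x\mapsto\varphi(\|x\|)$, with $\varphi$ chosen so that $\varphi(\|x_n\|)/\|x_n\|$ alternates between $0$ and $1$, to exclude $\|x_n\|\to\infty$ while $\mu(x_n)$ converges. This is an elegant use of exactly the structure at hand and needs nothing beyond what the paper already has available (Remark~\ref{rem:F-eqv:Free:isom}(v) and the definition of a free basis). The extension step $g\rightsquigarrow f$ with $L(f)\le 3L(g)$ and the homeomorphism verifications are routine and correctly carried out. Two cosmetic points you might tighten: when passing to the rapidly growing subsequence, note explicitly that $\mu(x_n)\to z$ persists along it, so the contradiction via continuity of $F$ is legitimate; and in the continuity of $\mu^{-1}$, your phrase ``the limit $s$ of $\|x_n\|$'' is really a subsequential limit---the full conclusion $x_n\to x$ follows because every subsequential limit of the bounded sequence $(\|x_n\|)$ must equal $\|x\|$ by the linear-independence argument, whence $\|x_n\|\to\|x\|$.
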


\begin{remark}\label{rem:AE:not:bilip}
  Fact \ref{cor:Ae:eq:not:bilip} and Remark \ref{rem:F-eqv:Free:isom} (i) show that every unbounded complete metric space with an isolated point is \AE-equivalent to a homeomorphic bounded complete metric space. Since bounded and unbounded metric spaces are not bi-Lipschitz isomorphic, this provides a source of examples of \AE-equivalent spaces that are not bi-Lipschitz isomorphic. 
  
\end{remark}

Now we proceed to building examples of \AE-equivalent spaces that are not even homeomorphic.   

Similarly as in \cite{W2}, given two pointed metric spaces $M, N$, their sum $M\coprod N$ is their disjoint union with glued base points, and the metric remains the same on $M$ and $N$ and we define the distance of each $x\in M$ and $y\in N$  as $d(x,y)=d(x,e_M)+d(e_N,y)$. Following \cite{W2},  if $C$ is a closed subset of a complete metric space $M$, then $\sim$ stands for the equivalence relation on $M$ such that $x\sim y$ if $x=y$ or if $x,y\in C$. We define the distance of two equivalence classes $[x],[y]$ as $d([x],[y])=\min(d(x,y),d(x,C)+d(y,C))$. By \cite[Proposition 1.26]{W2} this distance is a complete metric on the set $M/\sim$ of the equivalence classes. We denote this quotient metric space by $M/C$. If $M$ is a pointed metric space, we consider $M/C$ to be a pointed metric space with a base point being the equivalence class which contains the base point of $M$.

The quotient metric spaces can be used to produce \AE-equivalent metric spaces as follows. Fact \ref{fact:quotient} can be extracted from \cite[Theorem 2.13]{AACD_jfa}. We sketch the proof to show the exact relation.
\begin{fact}\label{fact:quotient}
    Let $M$ be a complete metric space and $\varphi:M\to M$ a Lipschitz retraction. Pick a base point $e_M\in \varphi(M)$ arbitrarily. Then $M$ is \AE-equivalent to $\varphi(M)\coprod (M/\varphi(M))$.
\end{fact}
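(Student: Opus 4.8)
The plan is to establish the \AE-equivalence directly on the level of Lipschitz-free spaces via Observation \ref{ob:M:free:basis:of:F(M)}(ii). Put $C=\varphi(M)$; as $\varphi$ is continuous, $C$ equals its fixed-point set and is hence a closed, and thus complete, subset of $M$ containing the chosen base point $e_M$. Set $N=C\coprod(M/C)$. Since a base-point-preserving Lipschitz map on a glued sum is exactly a pair of such maps on the summands, one has the isometric identification $\free{N}=\free{C}\oplus_1\free{M/C}$, under which $\delta_N(c)=(\delta_C(c),0)$ for $c\in C$ and $\delta_N([x])=(0,\delta_{M/C}([x]))$ for $[x]\in M/C$. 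It therefore suffices to produce an isomorphism $T\colon\free{M}\to\free{C}\oplus_1\free{M/C}$ carrying $\span\delta_M(M)$ onto $\span\delta_N(N)$.

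I would first define $T$ through the universal property \eqref{eq:free:diagram}: the assignment $x\mapsto(\delta_C(\varphi(x)),\delta_{M/C}(q(x)))$, with $q\colon M\to M/C$ the $1$-Lipschitz quotient map, is Lipschitz and base-point-preserving, so it extends to a bounded linear $T$ with $T\delta_M(x)=(\delta_C(\varphi(x)),\delta_{M/C}(q(x)))$. For a candidate inverse I would combine the isometric embedding $I\colon\free{C}\to\free{M}$ coming from the inclusion $C\hookrightarrow M$ with the linear extension $\bar S\colon\free{M/C}\to\free{M}$ of the map $S([x])=\delta_M(x)-\delta_M(\varphi(x))$; this $S$ is well defined because $\varphi$ fixes $C$, so it vanishes on the collapsed class. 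Putting $R(\mu,\nu)=I\mu+\bar S\nu$, a short computation on molecules --- invoking $\varphi|_C=\mathrm{id}$ and the retraction identity $\varphi\circ\varphi=\varphi$ --- yields $RT=\mathrm{id}$ and $TR=\mathrm{id}$ on the dense linear spans of molecules, whence $T$ is an isomorphism.

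The genuine content lies in verifying that $S$ is Lipschitz against the quotient metric $d_{M/C}([x],[y])=\min(d(x,y),\,d(x,C)+d(y,C))$, and this is the step I expect to be the main obstacle. I would bound $\norm{S([x])-S([y])}$ in two complementary ways: first by $d(x,y)+d(\varphi(x),\varphi(y))\le(1+\Lip\varphi)\,d(x,y)$, and second, using the isometry of $\delta_M$ together with $\varphi(c)=c$ for $c\in C$ to get $\norm{\delta_M(x)-\delta_M(\varphi(x))}=d(x,\varphi(x))\le(1+\Lip\varphi)\,d(x,C)$, by $(1+\Lip\varphi)\bigl(d(x,C)+d(y,C)\bigr)$. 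Taking the minimum of the two bounds gives $\Lip S\le 1+\Lip\varphi$, so $\bar S$ indeed exists.

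Finally I would check the span condition, which is precisely what promotes $\mathcal{F}$-equivalence to \AE-equivalence. The inclusion $T(\span\delta_M(M))\subseteq\span\delta_N(N)$ is immediate from the defining formula for $T$, and the reverse inclusion follows since $T\delta_M(c)=\delta_N(c)$ for $c\in C$ and $T\bigl(\delta_M(x)-\delta_M(\varphi(x))\bigr)=\delta_N([x])$ for $x\in M$ recover all the generators $\delta_N(z)$, $z\in N$. Hence $T$ maps $\span\delta_M(M)$ onto $\span\delta_N(N)$, and Observation \ref{ob:M:free:basis:of:F(M)}(ii) gives the \AE-equivalence of $M$ and $N$.
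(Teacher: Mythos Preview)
Your argument is correct. The isomorphism you build is precisely the predual of the one in the paper: the paper defines $T\colon\Lipz{\varphi(M)\coprod(M/\varphi(M))}\to\Lipz{M}$ by $T(f)=f\circ\varphi+f\circ q$, writes down the inverse, observes that both are continuous for pointwise convergence, and then invokes Proposition~\ref{fin:supported:iff:pointwise:homeo}. Your map on $\free{M}$ is exactly the (pre)adjoint of this $T$, and your inverse $R=I+\bar S$ corresponds to the paper's explicit formula for $T^{-1}$.

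The difference is therefore in the verification route rather than in the isomorphism itself. The paper's approach is shorter once Lemma~\ref{lemma:span:in:span:iff:point:cont} and Proposition~\ref{fin:supported:iff:pointwise:homeo} are available: pointwise continuity of $f\mapsto f\circ\varphi+f\circ q$ and of its inverse is immediate, and those results then automatically deliver the span condition. Your approach bypasses the duality machinery entirely and appeals only to Observation~\ref{ob:M:free:basis:of:F(M)}(ii), at the price of having to prove by hand the Lipschitz estimate for $S$ against the quotient metric and to check the span condition on generators; both of these you do correctly. Either route is fine, and together they illustrate that the span-preservation criterion and the pointwise-continuity criterion of Proposition~\ref{fin:supported:iff:pointwise:homeo} are two faces of the same coin.
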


\begin{proof}
Denote $q: M\to M/\varphi(M)$ the natural quotient map. It is not difficult to verify that the mapping $T:\Lipz{\varphi(M)\coprod (M/\varphi(M))}\to\Lipz{M}$ defined by $T(f)=f\circ\varphi+f\circ q$ is an isomorphism with the inverse given for a function $g\in\Lipz{M}$ by
$$T^{-1}(g)(x)=\begin{cases}
g(x)&\textup{ if }x\in\varphi(M),\\
g(y)-g(\varphi(y))&\textup{ if }x=[y]\in M/\varphi(M).
    \end{cases}
$$
Moreover, it is a homeomorphism in the topology of point-wise convergence. We can now apply Proposition \ref{fin:supported:iff:pointwise:homeo} to conclude the \AE-equivalence.
\end{proof}

A counterpart of this construction for free bases of Banach spaces takes the following form. 

\begin{theorem}\label{observation}
    Let $M$ be a free basis of a Banach space $X$ and $\pi:X\to X$ a bounded linear projection. Denote by $\sigma:X\to \ker\pi$ the corresponding projection defined as $\sigma(x)=x-\pi(x)$. 
    If $\pi(M)\subseteq M$, then 
    $\pi(M)\cup\sigma(M)$ is a free basis of $X$.
    In particular, $M$ and $\pi(M)\cup\sigma(M)$ are \AE-equivalent.
\end{theorem}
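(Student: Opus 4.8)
The plan is to verify directly that $N:=\pi(M)\cup\sigma(M)$ meets the three requirements in the definition of a free basis of $X$: it contains $0_X$, it is closed, and every base-point preserving Lipschitz map out of it admits a unique bounded linear extension to $X$. The \AE-equivalence will then follow with almost no extra work. That $0_X\in N$ is clear, since $0_X\in M$ gives $0_X=\pi(0_X)\in\pi(M)$. For closedness I would first note that $\pi(M)=M\cap\ker\sigma$: the inclusion $\subseteq$ is immediate, and if $z\in M\cap\ker\sigma$ then $z=\pi(z)\in\pi(M)$. Hence $\pi(M)$ is closed, being the intersection of the closed sets $M$ and $\ker\sigma$. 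Since a finite union of closed sets is closed, it then remains only to prove that $\sigma(M)$ is closed, which I expect to be the one genuinely delicate point.

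For the extension property, fix a Banach space $Y$ and a base-point preserving Lipschitz map $f:N\to Y$, and transport it to $M$ by setting
$$g(x)=f(\pi(x))+f(\sigma(x))\qquad(x\in M).$$
This is well defined because $\pi(x)\in\pi(M)\subseteq N$ and $\sigma(x)\in\sigma(M)\subseteq N$; it preserves base points as $\pi(0_X)=\sigma(0_X)=0_X$; and it is Lipschitz since $\pi,\sigma$ are bounded linear and $f$ is Lipschitz. As $M$ is a free basis of $X$, $g$ extends uniquely to a bounded linear $\hat g:X\to Y$, and I claim $\hat g$ restricts to $f$ on $N$. Indeed, for $p=\pi(x)\in\pi(M)$ we have $p\in M$, so using $\pi(p)=p$ and $\sigma(p)=0_X$ we get $\hat g(p)=g(p)=f(p)+f(0_X)=f(p)$; and for $x\in M$, linearity of $\hat g$ together with $\hat g(x)=g(x)$ and $\hat g(\pi(x))=f(\pi(x))$ yields $\hat g(\sigma(x))=\hat g(x)-\hat g(\pi(x))=f(\sigma(x))$. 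Thus $\hat g$ is a bounded linear extension of $f$, and any two such extensions agree on $N$, hence on $\span N\supseteq\span M$, which is dense in $X$ by Remark \ref{rem:F-eqv:Free:isom}(v); continuity then forces uniqueness.

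The crux is the closedness of $\sigma(M)$, and the idea is to exploit the isomorphism $\hat\delta:X\to\free{M}$ extending the isometry $\delta:M\to\free{M}$ (Remark \ref{rem:F-eqv:Free:isom}(iv)) together with the transportation description of the free-space norm. Suppose $x_n\in M$ with $\sigma(x_n)\to w$; I must show $w\in\sigma(M)$. If $w=0_X$ this is immediate from $0_X=\sigma(0_X)$. If $w\neq 0_X$, set $p_n:=\pi(x_n)\in M$ and apply $\hat\delta$ to $\sigma(x_n)-\sigma(x_m)=(x_n-p_n)-(x_m-p_m)$ to obtain the molecule $\delta(x_n)-\delta(p_n)-\delta(x_m)+\delta(p_m)$, whose free-space norm equals the minimal matching cost $\min\{\,d(x_n,p_n)+d(x_m,p_m),\ d(x_n,x_m)+d(p_n,p_m)\,\}$ (see \cite{W2}). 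Since $\hat\delta$ is an isomorphism, $(\sigma(x_n))$ being Cauchy forces these norms to tend to $0$; but the first matching cost tends to $2\|w\|>0$, so for large $n,m$ the minimum must be realised by the second term, giving $d(x_n,x_m)\to 0$. Hence $(x_n)$ is Cauchy, it converges to some $x\in M$, and $\sigma(x)=w$, so $w\in\sigma(M)$ as required.

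Finally, $M$ and $N$ are both free bases of the same space $X$, and $\span N=\span M$: indeed $x=\pi(x)+\sigma(x)$ shows $M\subseteq\span N$, while $\sigma(x)=x-\pi(x)$ with $x,\pi(x)\in M$ shows $N\subseteq\span M$. Taking $M'=M$ and $N'=N$ in the definition (or invoking Observation \ref{ob:M:free:basis:of:F(M)}(ii)) then gives that $M$ and $\pi(M)\cup\sigma(M)$ are \AE-equivalent. I emphasise that the only step requiring real care is the closedness of $\sigma(M)$; the extension property and the span computation are a direct unwinding of the definitions and of the projection identities $\pi^2=\pi$, $\sigma=I-\pi$.
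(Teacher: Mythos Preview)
Your argument is correct and mirrors the paper's proof: the paper separates the closedness of $\sigma(M)$ into a preparatory lemma (renorming $X$ by the $\free{M}$-norm rather than pushing forward via $\hat\delta$, but invoking the identical four-point molecule identity \eqref{eq:norm_molecules}), and for the extension property defines exactly your map $g(x)=f(\pi(x))+f(\sigma(x))$ and checks it agrees with $f$ on $\pi(M)\cup\sigma(M)$ via the same projection identities. The span equality and the resulting \AE-equivalence are handled the same way.
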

Before proving the theorem, we show that the set $\pi(M)\cup\sigma(M)$ is closed.

\begin{lemma}\label{lemma:pi(M)cup(sigma(M):closed}
 The sets   $\pi(M)$ and $\sigma(M)$ as in Theorem \ref{observation} are closed in $X$.
\end{lemma}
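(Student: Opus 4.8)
The plan is to show that $\pi(M)$ and $\sigma(M)$ are each closed in $X$ by exploiting completeness of $M$ (which holds since $M$ is a free basis, hence a complete metric space) together with the boundedness of the linear maps $\pi$ and $\sigma$. The key point is that a bounded linear projection is Lipschitz, so it interacts well with the metric structure, but being Lipschitz is not by itself enough to preserve closedness of a set under a continuous image. So I would instead try to identify each of these sets as an intersection of $M$-related closed sets with affine subspaces, or exhibit them as images under maps that are bi-Lipschitz onto their ranges.

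\medskip

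For $\pi(M)$, since we assume $\pi(M)\subseteq M$ and $\pi$ is a projection, I observe that $\pi(M)=M\cap\pi(X)$. Indeed, the inclusion $\pi(M)\subseteq M\cap\pi(X)$ is clear. Conversely, if $x\in M\cap\pi(X)$, then $x=\pi(x')$ for some $x'\in X$, and applying $\pi$ gives $\pi(x)=\pi^2(x')=\pi(x')=x$, so $x=\pi(x)\in\pi(M)$ because $x\in M$. Thus $\pi(M)=M\cap\pi(X)$. Now $M$ is closed in $X$ by the definition of a free basis, and $\pi(X)=\ker(\mathrm{Id}-\pi)$ is a closed linear subspace of $X$ because $\mathrm{Id}-\pi$ is a bounded (hence continuous) linear map and $\{0\}$ is closed. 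The intersection of two closed sets is closed, so $\pi(M)$ is closed.

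\medskip

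For $\sigma(M)$, the same strategy applies once I verify the analogous set identity, but here the obstacle is that $\sigma(M)$ need not be contained in $M$, so I cannot simply write $\sigma(M)=M\cap\ker\pi$. The hard part is therefore to control $\sigma(M)=\{x-\pi(x):x\in M\}$ directly. My plan is to use a net argument together with completeness: take a sequence (or net) $\sigma(x_n)=x_n-\pi(x_n)$ converging in $X$ to some limit $z\in\ker\pi$, and show $z\in\sigma(M)$. The difficulty is that convergence of $\sigma(x_n)$ does not immediately give convergence of $x_n$, since $\pi$ could collapse a divergent sequence. To handle this, I would split $x_n=\pi(x_n)+\sigma(x_n)$ and note that since $\pi(M)\subseteq M$ is closed by the first part and $\pi(x_n)\in\pi(M)$, it suffices to understand the behaviour of the $\pi(x_n)$ component. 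I anticipate exploiting that the map $x\mapsto(\pi(x),\sigma(x))$ is bi-Lipschitz from $X$ onto $\pi(X)\times\ker\pi$ (with a suitable product norm), so that $\sigma$ restricted to $M$ is a Lipschitz map whose image is the second coordinate of the image of the closed set $M$ under the bi-Lipschitz splitting; the genuine subtlety, and the step I expect to be the main obstacle, is that projecting a closed set onto a factor of a product need not preserve closedness unless the other factor stays bounded along limiting sequences, so I would need an argument (using that the $\pi$-component can be chosen in the closed set $\pi(M)$ and that the pair $(\pi(x_n),\sigma(x_n))$ can be adjusted to lie in $M$) to recover a genuine preimage point in $M$ and conclude $z\in\sigma(M)$.
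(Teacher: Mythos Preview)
Your argument for $\pi(M)$ is correct and essentially identical to the paper's: the paper writes $\pi(M)=M\cap\ker\sigma$, which is the same as your $M\cap\pi(X)$ since $\ker\sigma=\pi(X)$.

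For $\sigma(M)$, however, you have correctly identified the obstacle but not overcome it. Your final paragraph is an honest admission that you do not yet have an argument: the bi-Lipschitz splitting $x\mapsto(\pi(x),\sigma(x))$ carries $M$ to a closed subset of $\pi(X)\times\ker\pi$, but the projection of a closed subset of a product onto one factor is in general not closed, and nothing in the bare hypotheses (an arbitrary Banach space $X$, a closed subset $M$, a bounded projection with $\pi(M)\subseteq M$) forces the $\pi$-component to stay bounded along a sequence with convergent $\sigma$-component. Your suggestion to ``adjust'' the pair is not a strategy: two preimages in $M$ of the same $\sigma$-value differ by an element of $\ker\sigma\cap M=\pi(M)$, and there is no a~priori mechanism for picking a convergent representative.

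The paper closes this gap by exploiting structure you have not used: since $M$ is a free basis, $X$ can be renormed with the $\free{M}$-norm, and for that norm one has the Graev/Arens--Eells identity
\[
\norm{a-b+c-d}=\min\bigl\{\norm{a-b}+\norm{c-d},\ \norm{a-d}+\norm{c-b}\bigr\}\qquad(a,b,c,d\in M).
\]
Writing $b_n=\sigma(x_n)=x_n-\pi(x_n)$ with $x_n,\pi(x_n)\in M$, and passing to a subsequence with $\norm{b_n}>\varepsilon$ while $\norm{b_k-b_l}<\varepsilon$ for large $k,l$, this identity forces $\norm{b_k-b_l}=\norm{x_k-x_l}+\norm{\pi(x_k)-\pi(x_l)}$, so $(x_n)$ is Cauchy in $M$ and hence convergent. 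That is the missing idea: the closedness of $\sigma(M)$ genuinely uses the Lipschitz-free-space metric structure attached to a free basis, not just soft Banach-space facts about bounded projections.
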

\begin{proof}
 For this argument, we consider $X$ endowed with the equivalent norm inherited from $\free{M}$. Then the following well-known fact can be derived from \cite{Graev} (see Remark \ref{rem:graev}, for its proof see also \cite{AE, CHDT, W2}): Given $a,b,c,d\in M\subseteq X$, we have 
 \begin{equation}
     \label{eq:norm_molecules}
     \norm{a-b+c-d}=\min\{\norm{a-b}+\norm{c-d}, \norm{a-d}+\norm{c-b}\}.
 \end{equation}
 
The free basis $M$ is closed by definition and so is $\ker\sigma$ because $\sigma$ is continuous. Since $\pi(M)=M\cap\ker\sigma$, it follows that  $\pi(M)$ is closed in $X$. 
  
To show that $\sigma(M)$ is closed in $X$, we pick a Cauchy sequence $(a_n)_{n\in\N}$ of elements of $\sigma(M)$ arbitrarily and find its subsequence which converges to an element of $\sigma(M)$. 
  
  If $\norm{a_n}\to 0$, then $a_n\to 0_X\in\sigma(M)$ and we are done. Otherwise, there is $\varepsilon>0$ and a subsequence $(b_n)_{n\in\N}$ of $(a_n)_{n\in\N}$ such that $\norm{b_n}>\varepsilon \mbox{ for all } n\in\N.$
 For each $n\in\N$ pick $x_n\in M$ such that $b_n=x_n-\pi(x_n)$. Thus we have 
\begin{equation}\label{eq:norm:epsilon}
\norm{b_n}=\norm{x_n-\pi(x_n)}>\varepsilon \mbox{ for all } n\in\N.    
\end{equation}

As $(b_n)_{n\in\N}$ is Cauchy, we may fix $n_0\in\N$ such that $$\norm{b_k-b_l}<\varepsilon \mbox{ for all } k,l>n_0.$$

From \eqref{eq:norm_molecules} and \eqref{eq:norm:epsilon} we get 

\begin{equation}\label{eq:norm:of:the:difference}
    \norm{b_k-b_l}=\norm{x_k-\pi(x_k)-x_l+\pi(x_l)}=\norm{x_k-x_l}+\norm{\pi(x_k)-\pi(x_l)} \mbox{ for all } k,l >n_0.
\end{equation}

Because $(b_n)_{n\in\N}$ is Cauchy, so is $(x_n)_{n\in\N}$ by \eqref{eq:norm:of:the:difference}. Since $M$ is complete, $(x_n)_{n\in\N}$ has limit $x\in M$. Since $\pi$ is bounded, $(\pi(x_n))_{n\in\N}$ has limit $\pi(x)$. Hence $(b_n)_{n\in\N}$ has limit $x-\pi(x)=\sigma(x)\in\sigma(M)$. This finishes the proof.
\end{proof}

Now we can continue to prove the theorem above.
\begin{proof}[Proof of Theorem \ref{observation}]
Pick a Banach space $Y$ and $f\in\Lipz{\pi(M)\cup\sigma(M),Y}$ arbitrarily. 
    Define $h:M\to Y$ by $$h(x)=f(\pi(x))+f(\sigma(x)) \mbox{ for every } x\in M.$$ We claim that $h\in\Lipz{M,Y}$. Indeed, for $x,y\in M$ we have $$\norm{h(x)-h(y)}=\norm{f(\pi(x))-f(\pi(y))+f(\sigma(x))-f(\sigma(y))}\le {L(f)(\norm{\pi}+\norm{\sigma})}\norm{x-y}.$$ Thus $h$ can be  extended to a bounded linear map $\hat{h}$ on $X$. 

   Let us show, that the restriction of $\hat{h}$ to $\pi(M)\cup\sigma(M)$ is $f$.

    Indeed, for $x\in\pi(M)$ we have $\pi(x)=x$ and consequently $$\hat{h}(x)= h(x)=f(\pi(x))+f(\sigma(x))=f(x)+f(\sigma\circ\pi(x))=f(x)+f(0_X)=f(x).$$

Similarly, given $x\in\sigma(M)$, there is $m\in M$ such that $x=\sigma(m)=m-\pi(m)$. Thus $$\hat{h}(x)=\hat{h}(m-\pi(m))=h(m)-h(\pi(m))=f(\pi(m))+f(\sigma(m))-f(\pi\circ\pi(m))-f(\sigma\circ\pi(m))=
$$ $$=f(\pi(m))+ f(x)-f(\pi(m))-f(0_X)=f(x).$$

To summarize: $\hat{h}$ is a bounded linear map which extends $f$, and this extension is unique because $\pi(M)\cup\sigma(M)$ is linearly dense in $X$. 
Since $\pi(M)\cup\sigma(M)$ is a closed subset of $X$ by Lemma \ref{lemma:pi(M)cup(sigma(M):closed}, this  shows that $\pi(M)\cup\sigma(M)$ is a free basis of $X$. 

Obviously, $\span M=\span (\pi(M)\cup\sigma(M))$. Hence $M$ and $\pi(M)\cup\sigma(M)$ are \AE-equivalent.
\end{proof}

Using the quotient spaces, we can find examples demonstrating that \AE-equivalent spaces need not be homeomorphic.

\begin{example}\label{ex:AE:not:homeo}
    There are non-homeomorphic \AE-equivalent spaces.
    Indeed, since $\R$ is a Lipschitz retract of $\R^2$, the spaces $\R^2$ and $\R\coprod \R^2/\R$ are \AE-equivalent by Fact \ref{fact:quotient}.
 \end{example}

    Note that, by {Corollary} \ref{thm:same:dim:for:fin:supported}, two \AE-equivalent spaces must have the same covering dimension. Nevertheless, the next example shows that Assouad dimension of a metric space is not preserved by \AE-equivalence. This dimension was introduced in \cite{Assouad}. For the definition and its properties we refer the reader to \cite{Luuk}, where it is explained, that a metric space is doubling if and only if it has a finite Assouad dimension \cite[Facts 3.3]{Luuk}. We shall also need the trivial facts, that every metric space with infinite bounded uniformly discrete subset has infinite Assouad dimension and that the Assouad dimension of natural numbers (with the usual metric) is equal to 1.

\begin{example}\label{ex:assouad:not:preserved:AE}
    There are \AE-equivalent spaces {such that} one {has} Assouad dimension 1 and the other {has} Assouad dimension $\infty$. In particular, there are two \AE-equivalent spaces one of which has the doubling property and the other one does not.

    Indeed, if $M=\{0,x_1,x_2,x_3,\ldots\}$ is an arbitrary bounded separable uniformly discrete space, then $\free{M}$ is isomorphic to $\free{\N}$ via the isomorphism $T:\free{M}\to\free{\N}$ given by $T(\delta_M(x_n))=\delta_{\N}(n)-\delta_{\N}(n-1)$. So we can use Observation \ref{ob:M:free:basis:of:F(M)} (ii) to conclude that $M$ and $\N$ are \AE-equivalent.

    \end{example}

 \section{\texorpdfstring{\AE}--equivalence preserves the covering dimension}

Given topological spaces $M$ and $N$,we denote the fact that $F$ is a mapping from $M$ to the power set of $N$ 
by $F:M\Rightarrow N$. We say that $F:M\Rightarrow N$ is finite-valued, if $F(x)$ is a finite subset of $N$ for all $x\in M$. Further, we say that $F$ is {\em lower semi-continuous\/}
(abbreviated by {\em \lsc\/}), if for every open $U\subseteq N$ the set
$$\{x\in M:F(x)\cap U\neq\emptyset\}
$$
is open in $M$.

As in \cite[Definition 3.3]{JS}, given a linear basis $M$ of a vector space $X$ we define the finite-valued support function $\supp_M:X\Rightarrow M$ by letting $\supp_M(a)$ be the unique set $K_a\subseteq M$ such that $$a=\sum_{x\in K_a}r_xx \mbox{,  where all $r_x$ are non-zero reals.}$$ 
Here we use the convention that sum over an empty set is the null vector.

The following relatively simple fact is a direct consequence of \cite[Proposition 5.6]{JS}.
\begin{fact}\label{fact:supp:lcs}
    Let $M$ be a linear basis of a vector space $X$ and $\mathcal{T}$ some topology on $X$. Assume that for every $\mathcal{T}\upharpoonright_M$-closed subset $A$ of $M$ and all $x\in M\setminus A$ there is a continuous linear functional $f:(X,\mathcal{T})\to\R$ such that $f(A)\subseteq\{0\}$ and $f(x)\neq 0$. Then $\supp_M:(X,\mathcal{T})\Rightarrow (M,\mathcal{T}\upharpoonright_M)$ is lsc.
\end{fact}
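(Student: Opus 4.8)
The plan is to verify the hypothesis of \cite[Proposition 5.6]{JS} directly, namely that the topology $\mathcal{T}$ on $X$ separates each point $x\in M$ from any relatively closed subset $A\subseteq M$ not containing $x$ by means of a continuous linear functional vanishing on $A$. Once this separation property is established, the lower semi-continuity of $\supp_M$ follows immediately from the cited proposition, so the bulk of the work is the unfolding of definitions to see why the statement of Fact \ref{fact:supp:lcs} is exactly what \cite[Proposition 5.6]{JS} delivers.

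First I would recall precisely what \cite[Proposition 5.6]{JS} asserts and check that the separation hypothesis here matches its requirements. The intuition is the following: to show $\supp_M$ is \lsc, we must show that for every open $U\subseteq M$ the set $\{a\in X: \supp_M(a)\cap U\neq\emptyset\}$ is open in $(X,\mathcal{T})$. Equivalently, fixing $a_0$ with $\supp_M(a_0)\cap U\neq\emptyset$, we must produce a $\mathcal{T}$-neighborhood of $a_0$ all of whose points still have some support coordinate lying in $U$. The key mechanism is that if $x\in\supp_M(a_0)\cap U$, then the $x$-coordinate of $a_0$ in the basis $M$ is a nonzero real $r_x$, and this coordinate is computed by a linear functional; continuity of that functional in $\mathcal{T}$ then keeps the coordinate nonzero on a whole neighborhood.

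The functional realizing the coordinate is precisely what the separation hypothesis supplies. Given $x\in M$, apply the hypothesis with $A=(M\setminus U)\cap\overline{M}$ in the appropriate sense; more carefully, one wants a continuous linear functional that is nonzero at $x$ and vanishes at all other relevant basis elements, so that its value at $a_0$ detects the $x$-coordinate. The engine of \cite[Proposition 5.6]{JS} is exactly the construction that converts pointwise separation by functionals into coordinate-detecting functionals, and hence into lower semi-continuity of the support map. So the proof reduces to citing that proposition after verifying its input.

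The main obstacle I anticipate is purely bookkeeping rather than conceptual: one must be careful that the separation is stated for \emph{relatively} $\mathcal{T}\upharpoonright_M$-closed sets $A$ and points $x\in M\setminus A$, and that this local separation suffices to control the global coordinate functionals on all of $X$ (not merely on $M$), since $\supp_M$ is defined on $X$ while the hypothesis concerns only subsets of $M$. The resolution is that \cite[Proposition 5.6]{JS} is formulated precisely to bridge this gap, so no genuinely new argument is needed; the entire content of Fact \ref{fact:supp:lcs} is the observation that the separation hypothesis is a verbatim instance of the hypothesis of that proposition. Thus I would keep the proof to a single short paragraph: restate the separation property, note it is exactly the premise of \cite[Proposition 5.6]{JS}, and conclude.
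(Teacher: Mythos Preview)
Your proposal is correct and matches the paper's approach exactly: the paper simply states that the fact is a direct consequence of \cite[Proposition 5.6]{JS}, and your write-up amounts to unpacking why the separation hypothesis here is precisely the input that proposition requires. No further argument is needed.
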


\cite[Corollary 5.4]{Sp1}
provides us with the following fact, where $\dim M$ stands for the {covering} dimension of the metric space $M$.

{\begin{fact}\label{fact:from:HJM}
For $i\in\{0,1\}$ suppose that $M_i$  is a perfectly normal space, and $F_i:M_{1-i}\Rightarrow M_{i}$ is a finite-valued lower semi-continuous mapping. If for every $y\in M_1$
there exists $x\in M_0$ with $x\in F_{0}(y)$ and $y\in F_1(x)${, t}hen $\dim
M_1\le \dim M_0$.    
\end{fact}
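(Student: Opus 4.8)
The plan is to derive the inequality from the standard covering-number characterisation of dimension: for a normal space $Z$ one has $\dim Z\le n$ if and only if every finite open cover of $Z$ admits a finite open refinement of order at most $n+1$, meaning that no point lies in more than $n+1$ members. Writing $n=\dim M_0$ (the case $n=\infty$ being vacuous), I would fix a finite open cover $\omega=\{O_1,\dots,O_m\}$ of $M_1$ and try to produce a refinement of order $\le n+1$, transporting the dimension bound from $M_0$ across the two correspondences.

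First I would move the combinatorics to $M_0$, where the dimension is controlled. The lower inverse $H_i=\{x\in M_0:F_1(x)\cap O_i\ne\emptyset\}$ is open by lower semicontinuity of $F_1$, and the sets $H_i$ cover the open set $B=\{x\in M_0:F_1(x)\ne\emptyset\}$. Here perfect normality enters in an essential way: $B$ is an open, hence $F_\sigma$, subset of the perfectly normal space $M_0$, so by the countable sum theorem together with monotonicity of $\dim$ on closed subspaces one obtains $\dim B\le\dim M_0=n$. Consequently $\{H_i\}$ has a finite open (in $B$) refinement $\{H_i'\}$, indexed by the same set with $H_i'\subseteq H_i$, of order at most $n+1$. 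I would then pull these back through $F_0$ by $G_i=O_i\cap\{y\in M_1:F_0(y)\cap H_i'\ne\emptyset\}$, which are open in $M_1$ by lower semicontinuity of $F_0$ and refine $\omega$ since $G_i\subseteq O_i$. The coupling hypothesis is what one invokes to make $\{G_i\}$ cover $M_1$: given $y\in M_1$ pick $x$ with $x\in F_0(y)$ and $y\in F_1(x)$; then $x\in B$, so $x\in H_i'$ for some $i$, and finiteness of the fibres $F_1(x)$ is what allows the refinement $\{H_i'\}$ on $M_0$ to be chosen subordinated to the pairing so that this same index $i$ also satisfies $y\in O_i$, placing $y$ in $G_i$.

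I expect the genuine obstacle to be the control of the \emph{order} of $\{G_i\}$ on $M_1$, and in fact the same delicate point governs the covering just described. Because $F_0$ is only lower semicontinuous, its finite value set $F_0(y)$ may meet many different $H_i'$ through different points, so the crude bound on the multiplicity of $y$ is $(n+1)\cdot|F_0(y)|$, which is finite but unbounded and useless for concluding $\dim M_1\le n$. One cannot avoid this with a single correspondence, since a finite-valued lower semicontinuous map can raise dimension (as in Kolmogorov-type open finite-to-one examples); the role of the second map together with the coupling $x\in F_0(y)$, $y\in F_1(x)$ is precisely to rule this out. The heart of the argument, carried out in \cite[Corollary 5.4]{Sp1}, is therefore to reconcile two competing demands: openness of the pulled-back sets forces the use of lower inverses, while the order bound forces $y$ to be counted only through a single witnessing point $x_y\in F_0(y)$ with $y\in F_1(x_y)$. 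Arranging the subordinated refinement so that the multiplicity of $y$ in $\{G_i\}$ is governed by the multiplicity of $x_y$ in the order-$(n+1)$ family $\{H_i'\}$, rather than by the size of $F_0(y)$, is the step I would expect to require the full strength of the finite-valuedness and lower semicontinuity of both correspondences.
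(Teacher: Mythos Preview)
The paper does not prove this statement at all; it is quoted verbatim as \cite[Corollary~5.4]{Sp1} and used as a black box. So there is nothing to compare your argument against in the present paper.

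That said, what you have written is not a proof but a plan with an explicitly flagged gap, and the gap is real. Your refinement $\{H_i'\}$ is chosen on $M_0$ only for its order; nothing in its construction ties the index $i$ to the particular element $y\in F_1(x)$ that you want to capture. Concretely, from $x\in H_i'\subseteq H_i$ you only learn that $F_1(x)\cap O_i\neq\emptyset$, not that your specific $y$ lies in $O_i$; your sentence ``finiteness of the fibres $F_1(x)$ is what allows the refinement $\{H_i'\}$ on $M_0$ to be chosen subordinated to the pairing'' is an assertion, not an argument, and it is exactly the nontrivial step. Likewise the order bound fails for the reason you state: $F_0(y)$ may hit many $H_i'$, so the naive pullback has uncontrolled multiplicity. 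Resolving both issues simultaneously is the substance of \cite{Sp1}, and it requires more than a single pass of ``refine on $M_0$, pull back to $M_1$''; one has to stratify $M_1$ (or the witnessing pairs) according to the sizes of the finite fibres and work locally, which is where perfect normality and the countable sum theorem are genuinely used beyond the single $F_\sigma$ observation you make. As written, your proposal correctly isolates the difficulty but does not overcome it.
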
}

{Combining the above two facts, we can derive the following result about the correspondence of dimensions.}

\begin{theorem}\label{them:main}
    Let $X$ be a vector space endowed with a topology $\mathcal{T}$. For $i\in\{0,1\}$ let $M_i$ be a  linear basis of  $X$ which is perfectly normal with respect to the topology $\mathcal{T}$.  Assume that for every $\mathcal{T}\upharpoonright_{M_i}$-closed subset $A$ of $M_i$ and all $x\in M_i\setminus A$ there is a continuous linear functional $f:(X,\mathcal{T})\to\R$ such that $f(A)\subseteq\{0\}$ and $f(x)\neq 0$. Then $\dim M_0=\dim M_1$.
\end{theorem}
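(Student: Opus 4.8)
The plan is to establish the two inequalities $\dim M_0 \le \dim M_1$ and $\dim M_1 \le \dim M_0$ symmetrically, each by a single application of Fact \ref{fact:from:HJM}. To feed that fact, I need two finite-valued lower semi-continuous multimaps going in opposite directions between $M_0$ and $M_1$, together with the coupling condition on base points. The natural candidates are the support functions: since both $M_0$ and $M_1$ are linear bases of the \emph{same} vector space $X$, I can consider $\supp_{M_0} : X \Rightarrow M_0$ and $\supp_{M_1} : X \Rightarrow M_1$, and then restrict them appropriately to obtain maps between the two bases.

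\textbf{First step.} I would invoke Fact \ref{fact:supp:lcs} to each basis. The hypothesis of Theorem \ref{them:main}—that closed subsets of $M_i$ can be separated from external points by continuous linear functionals—is exactly the separation hypothesis of Fact \ref{fact:supp:lcs}. Hence both $\supp_{M_0} : (X,\mathcal{T}) \Rightarrow (M_0, \mathcal{T}\!\upharpoonright_{M_0})$ and $\supp_{M_1} : (X,\mathcal{T}) \Rightarrow (M_1, \mathcal{T}\!\upharpoonright_{M_1})$ are lsc and finite-valued. The next step is to turn these into maps \emph{between} the bases. For the inequality $\dim M_1 \le \dim M_0$, I set $F_0 := \supp_{M_0}\!\upharpoonright_{M_1} : M_1 \Rightarrow M_0$ and $F_1 := \supp_{M_1}\!\upharpoonright_{M_0} : M_0 \Rightarrow M_1$. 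Restricting an lsc map to a subspace preserves lower semi-continuity (the preimage of an open set simply intersects with the subspace), and finite-valuedness is inherited, so the hypotheses of Fact \ref{fact:from:HJM} on the maps themselves are met, with $M_0, M_1$ perfectly normal by assumption.

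\textbf{The coupling condition} is where the real content lies, and I expect it to be the main obstacle. Fact \ref{fact:from:HJM} requires that for every $y \in M_1$ there exist $x \in M_0$ with $x \in F_0(y) = \supp_{M_0}(y)$ and $y \in F_1(x) = \supp_{M_1}(x)$. The claim is a symmetry property of the two support expansions: if $y \in M_1$ is written in the $M_0$-basis as $y = \sum_{x \in \supp_{M_0}(y)} r_x\, x$, then at least one basis vector $x$ appearing there must, when re-expanded in the $M_1$-basis, have $y$ in \emph{its} support. I would argue this by contradiction: suppose for some $y \in M_1$ that $y \notin \supp_{M_1}(x)$ for every $x \in \supp_{M_0}(y)$. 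Writing each such $x = \sum_{z \in \supp_{M_1}(x)} s^x_z\, z$ as an $M_1$-combination avoiding $y$, and substituting back into $y = \sum_x r_x\, x$, expresses $y$ as an $M_1$-linear combination of basis vectors none of which is $y$ itself. This contradicts the linear independence of $M_1$ (the coefficient of $y$ in its own unique $M_1$-expansion is $1$, not $0$). The slightly delicate point is bookkeeping: one must be careful that the null vector, which has empty support by the stated convention, is handled correctly, and that the finiteness of all supports involved makes the substitution a legitimate finite rearrangement.

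\textbf{Concluding.} Having verified all hypotheses, Fact \ref{fact:from:HJM} yields $\dim M_1 \le \dim M_0$. Swapping the roles of $M_0$ and $M_1$—setting $F_0 := \supp_{M_1}\!\upharpoonright_{M_0}$ and $F_1 := \supp_{M_0}\!\upharpoonright_{M_1}$, with the symmetric coupling argument—gives $\dim M_0 \le \dim M_1$. Combining the two inequalities gives $\dim M_0 = \dim M_1$, as required.
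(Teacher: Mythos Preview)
Your proposal is correct and follows essentially the same route as the paper: apply Fact~\ref{fact:supp:lcs} to obtain lower semi-continuity of both support maps, restrict them to obtain $F_i=\supp_{M_i}\!\upharpoonright_{M_{1-i}}$, and then verify the coupling condition via linear independence before invoking Fact~\ref{fact:from:HJM} in both directions. The paper compresses the coupling step to a single sentence (``it follows from the linear independence of $M_i$''), whereas you spell out the contradiction argument explicitly, but there is no substantive difference.
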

\begin{proof}
    By Fact \ref{fact:supp:lcs}, the support function  $\supp_{M_i}:(X,\mathcal{T})\Rightarrow (M_i,\mathcal{T}\upharpoonright_{M_i})$ is lsc for $i\in\{0,1\}$. Denote by $F_i$ the restrictions of $\supp_{M_i}$ to $M_{1-i}$. It follows from the linear independence of $M_i$ that for all $y\in M_i$ there is $x\in M_{1-i}$ such that $x\in F_{1-i}(y)$ and $y\in F_i(x)$. Hence $\dim M_i\leq\dim M_{1-i}$ for both $i\in\{0,1\}$ by Fact \ref{fact:from:HJM}. 
\end{proof}

{Finally, we show that Theorem \ref{them:main} applies to \AE-equivalent complete metric spaces.}
\begin{corollary}\label{thm:same:dim:for:fin:supported}
If $M$ and $N$ are \AE-equivalent complete metric spaces, then $\dim M=\dim N$. 
\end{corollary}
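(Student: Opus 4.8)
The plan is to deduce Corollary~\ref{thm:same:dim:for:fin:supported} directly from Theorem~\ref{them:main} by producing a single topological vector space in which both $M$ and $N$ sit as linear bases satisfying the separation hypothesis of the theorem. First I would invoke Observation~\ref{ob:M:free:basis:of:F(M)}~(ii): since $M$ and $N$ are \AE-equivalent, there is an isomorphism $T$ from $\free{M}$ onto $\free{N}$ mapping $\span\delta_M(M)$ onto $\span\delta_N(N)$. I would work inside $X:=\free{N}$, identifying $N$ with $\delta_N(N)$ and setting $M_1:=\delta_N(N)$, while letting $M_0:=T(\delta_M(M))$ be the image of the copy of $M$. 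The common linear span $\span M_0=\span M_1=:V$ is then a single vector space of which both $M_0$ and $M_1$ are \emph{Hamel} bases (using Remark~\ref{rem:F-eqv:Free:isom}~(v), which guarantees linear independence after removing the base point). I would equip $V$ with the topology $\mathcal{T}$ inherited from the norm of $\free{N}$; because $T$ is a bi-Lipschitz isomorphism onto its image, $M_0$ with the subspace topology is homeomorphic (indeed bi-Lipschitz) to $M$, and $M_1$ to $N$.

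The next step is to verify the two hypotheses of Theorem~\ref{them:main} for this $X=V$ with $\mathcal{T}$. Perfect normality of $M_0$ and $M_1$ is immediate: both are metric spaces (subsets of the metric space $\free{N}$), and metric spaces are perfectly normal, so this transfers through the bi-Lipschitz identifications. For the separation condition I would argue as follows: given a relatively closed $A\subseteq M_i$ and a point $x\in M_i\setminus A$, I need a continuous linear functional on $(V,\mathcal{T})$ vanishing on $A$ but not at $x$. This is exactly where the Lipschitz-free machinery enters. Continuous linear functionals on $\free{N}$ restrict to elements of $\Lipz{N}$ via the duality $\free{N}^*\cong\Lipz{N}$, and a real-valued Lipschitz function on the metric space $N$ that vanishes on the closed set corresponding to $A$ and is nonzero at the point corresponding to $x$ can be produced directly, e.g.\ by $y\mapsto d(y,A')$ after translating to the base point, where $A'$ is the closed set in $N$ matching $A$. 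For $M_1=\delta_N(N)$ this is transparent; for $M_0=T(\delta_M(M))$ one uses that $T^*:\Lipz{N}\to\Lipz{M}$ is an isomorphism, so Lipschitz functions separating points and closed sets in $M$ pull back to the required functionals on $V$.

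With both hypotheses in hand, Theorem~\ref{them:main} yields $\dim M_0=\dim M_1$, and since $M_0$ is bi-Lipschitz (hence homeomorphic) to $M$ and $M_1$ to $N$, and the covering dimension is a homeomorphism invariant, we conclude $\dim M=\dim N$. The main obstacle I anticipate is the separation hypothesis, specifically confirming that the point-evaluation/separation argument is carried out in the \emph{metric} topology on $M_i$ rather than some coarser weak topology: I must ensure that the relatively $\mathcal{T}$-closed subsets $A$ of $M_i$ are precisely the metrically closed subsets (which holds because $\mathcal{T}\upharpoonright_{M_i}$ is the norm topology, matching the metric of $\free{N}$), and that the separating functional is genuinely $\mathcal{T}$-continuous and linear on all of $V$, not merely Lipschitz on $M_i$. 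Verifying that a bounded Lipschitz function vanishing on $A'$ and nonzero at $x$ extends to a bounded linear functional on $\free{N}$ (via the isometry $\Lipz{N}\cong\free{N}^*$) and that its pullback under $T^*$ remains a valid separating functional for $M_0$ is the delicate bookkeeping step, though conceptually it follows cleanly from the duality already recorded in the excerpt.
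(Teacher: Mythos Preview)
Your approach is essentially the paper's: place both spaces as linear bases of a common normed linear space and invoke Theorem~\ref{them:main}, with the separation hypothesis supplied by extending Lipschitz functions to bounded linear functionals. The paper streamlines this slightly by working directly from the definition of \AE-equivalence (free bases $M,N$ of a common Banach space $Y$ with $\span M=\span N$) rather than passing through Observation~\ref{ob:M:free:basis:of:F(M)}~(ii) and the duality $\free{N}^*\cong\Lipz{N}$; this makes the separation step a one-liner, since a Lipschitz function on a free basis extends to a bounded linear functional by the very definition of free basis, and no bookkeeping with $T^*$ or $(T^*)^{-1}$ is needed.

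There is one genuine loose end in your write-up. The sets $M_0=T(\delta_M(M))$ and $M_1=\delta_N(N)$ contain $0_X$ and are therefore \emph{not} Hamel bases of $V$; you acknowledge this parenthetically (``after removing the base point'') but never actually remove it. Passing to $M_i\setminus\{0_X\}$ repairs the linear-independence hypothesis of Theorem~\ref{them:main}, but then your final identification breaks: $M_0\setminus\{0_X\}$ is bi-Lipschitz only to $M\setminus\{e_M\}$, not to $M$, so Theorem~\ref{them:main} delivers $\dim(M\setminus\{e_M\})=\dim(N\setminus\{e_N\})$ rather than $\dim M=\dim N$. The paper closes this gap explicitly at the end of its proof by invoking the folklore fact (Countable Sum Theorem together with the Subspace Theorem for perfectly normal spaces, \cite{Eng}) that deleting a single point from a metric space does not change its covering dimension. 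You should add this step.
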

\begin{proof}
Since the covering dimension is preserved by bi-Lipschitz maps, we may and will assume that $M$ and $N$ are free bases of a common Banach space {$Y$} and $\span M=\span N$. 

Put $X=\span M=\span N$, $M_0=M\setminus\{0_X\}$, $M_1=N\setminus\{0_X\}$ and consider $X$ with the norm topology {inherited from $Y$}. We claim that the conditions of Theorem \ref{them:main} are satisfied. Indeed, for both $i\in\{0,1\}$, $M_i$ is linearly independent by Remark \ref{rem:F-eqv:Free:isom} (v) and thus it is a linear basis of $X=\span M_i$. Moreover, given a closed subset $A\subset M_i$ and $x\in M_i\setminus A${,} there is a function $f\in\Lipz{M_i\cup\{0_X\}}$ such that $f(A)\subseteq \{0\}$ and $f(x)=1$. Since $M_i\cup \{0_X\}$ is a free basis, such a function can be extended to a bounded linear functional on $X$ {and} the equality $\dim M_0=\dim M_1$ follows. 

To complete the proof it suffices to observe that given a point $x$ of a perfectly normal space $M$ (of a metric space in particular), we have $\dim M=\dim M\setminus\{x\}$. This folklore fact follows from the Countable Sum Theorem \cite[3.1.8]{Eng}  and from the Subspace Theorem for the \v{C}ech-Lebesgue dimension of perfectly normal spaces \cite[2.1.3 and 3.1.19]{Eng}{,}  and from the fact that open subsets of a perfectly normal space are $F_\sigma$-sets. 
\end{proof}

\begin{remark}
Let us note, that for bounded separable metric spaces{,} {Corollary} \ref{thm:same:dim:for:fin:supported} can be proved by a technique developed in \cite{Gulko}. Indeed, for \AE-equivalent spaces $M,N$,  by Proposition \ref{fin:supported:iff:pointwise:homeo}, the spaces $\Lipz{M}$ and $\Lipz{N}$ are linearly homeomorphic (hence uniformly homeomorphic) in the topology of point-wise convergence. Consequently, the spaces $\Lip(M)$ and $\Lip(N)$ of all Lipschitz functions on $M$ and $N$ respectively are uniformly homeomorphic in the topology of point-wise convergence. Since the space of Lipschitz functions on a metric space is closed under point-wise addition and scalar multiplication and for bounded metric space also under multiplication   (see for instance \cite[Propositions 1.29 and 1.30]{W2}), and Lipschitz functions separate points and closed sets, $\Lip(M)$ and $\Lip(N)$ form so called QS-algebras (see \cite[1.1 Definition]{Gulko}). To derive {Corollary} \ref{thm:same:dim:for:fin:supported} for bounded separable metric spaces {it} suffices now to apply \cite[1.4. Proposition]{Gulko}.
\end{remark}

\end{document}